\newtheorem{theorem}{Theorem}[section]
\newtheorem{co}[theorem]{Corollary}
\newtheorem{rem}[theorem]{Remark}
\newtheorem{example}[theorem]{Example}
\def\diag{\mathop{\rm diag}}
\def\tr{\mathop{\rm tr}}
\def\rank{\mathop{\rm rank}}
\def\kmms{\kern-\mathsurround}
\newcommand{\calX}{{\cal X}}
\newcommand{\calZ}{{\cal Z}}
\newcommand{\bbR}{{\mathbb R}}
\newcommand{\bbS}{{\mathbb S}}
\begin{document}

\title{A unified approach to marginal equivalence 
in the general framework of group invariance}
\author{
Hidehiko Kamiya\footnote
{
{\rm This work was partially supported by 
JSPS 
KAKENHI Grant Number 25400201. } 
} 
\\ 
{\it Graduate School of Economics, Nagoya University}
}
\date{March 2014}

\maketitle

\begin{abstract}
Two Bayesian models with different sampling densities 
are said to be marginally equivalent 
if the joint distribution of observables 
and the parameter of interest is the same 
for both models. 
We discuss marginal equivalence 
in the general framework of group invariance. 
We introduce a class of sampling models 
and establish marginal equivalence 
when the prior for the nuisance parameter is relatively invariant. 
We also obtain some robustness properties of 
invariant statistics under our sampling models. 
Besides the prototypical example of $v$-spherical distributions, 
we apply our general results to 
two examples---analysis of affine shapes and principal component analysis. 

\smallskip
\noindent
{\it Key words}: 
Bayesian model, 
group invariance, 
marginal equivalence, 
null robustness, 
relatively invariant measure, 
$v$-spherical distribution. 

\smallskip
\noindent
{\it MSC2010}: 62H10, 62E15, 62F15. 
\end{abstract}


\section{Introduction} 
Two Bayesian models with different sampling densities 
are said to be marginally equivalent 
(Osiewalski and Steel \cite{OS93JE59}, 
Fern\'{a}ndez, Osiewalski and Steel \cite{FOS95JASA}) 
if the joint distribution of observables 
and the parameter of interest is the same 
for both models. 
In the present paper, we discuss marginal equivalence 
in the general framework of group invariance. 

For elliptical distributions with the scale matrix 
known up to a scalar multiplication, 
suppose this scalar is a nuisance parameter and that 
the parameter of interest is the location. 
For a fixed scale matrix up to a scalar, 
different density generators 
give rise to different elliptical distributions.  
When the prior for the nuisance scalar parameter 
is taken to be the noninformative prior, 
Bayesian models with one of these elliptical sampling densities 
are all marginally equivalent (Osiewalski and Steel \cite{OS93JE57}). 
A similar result is shown to hold true for $l_q$-spherical distributions 
(Osiewalski and Steel \cite{OS93Biometrika}). 

In an excellent paper \cite{FOS95JASA}, 
Fern\'{a}ndez, Osiewalski and Steel introduced 
a wide class of flexible multivariate distributions called 
$v$-spherical distributions. 
Essentially the same class of distributions 
(without location and scale) 
was studied independently 
by Kamiya, Takemura and Kuriki \cite{KTK08} 
under the name ``star-shaped distributions.'' 
Furthermore, 
the class of $v$-spherical distributions is as rich as  
the $D$-class of distributions of Ferreira and Steel \cite{FS05JRSS(B)}. 
The class of $v$-spherical distributions  
includes elliptical distributions and $l_q$-spherical distributions, 
but it also allows asymmetry of distributions 
and includes, e.g., multivariate skewed exponential power distributions.  
Fern\'{a}ndez, Osiewalski and Steel \cite{FOS95JASA} 
proved that marginal equivalence remains to hold true 
for this wide class of $v$-spherical distributions. 

The motivation of the present paper 
is to extend the discussion of marginal equivalence for 
$v$-spherical distributions 
in \cite{FOS95JASA} 
to the general situation of group invariance. 
We examine marginal equivalence 
under a class of sampling models 
based on the orbital decomposition of the sample space. 
We also study robustness of the distributions of invariant statistics 
under our sampling models.
These sampling models can be considered a generalization of 
the $D$-class of distributions 
(or the class of $v$-spherical distributions). 
By making discussions in the general framework, 
we can apply our results to problems other than those 
about multivariate distributions with location and scale. 

The organization of this paper is as follows. 
We introduce our sampling models 
in Section \ref{sec:sampling-models}. 
Next, in Section \ref{sec:r-of-i.s.} 
we obtain some results about robustness of invariant statistics, 
and in Section \ref{sec:ME} we establish marginal equivalence 
when the prior for the nuisance parameter is relatively invariant. 
The prototypical example of our arguments is, of course, 
the $v$-spherical distribution. 
In Sections \ref{sec:sampling-models}--\ref{sec:ME}, 
we illustrate our discussions 
with the case of the $v$-spherical distribution. 
In Section \ref{sec:examples}, we apply our general results to 
two other examples---analysis of 
affine shapes in Subsection \ref{sec:affine-shapes} 
and principal component analysis (PCA) in Subsection \ref{sec:PCA}.  
We conclude with some concluding remarks in 
Section \ref{sec:concluding-remarks}. 

\section{Sampling models}
\label{sec:sampling-models}

In this section, we introduce our sampling models in the general 
framework of group invariance. 
For basic concepts in group invariance in statistics,
the reader is referred to Section 1.4 of Kariya and Kurata \cite{KK04}. 

Suppose a group $H$ acts on 
a space $\calX$ to the left: 
$H\times \calX \ni (h,x) \mapsto hx \in \calX$.  
Moreover, suppose another group $G$ acts on a 
subspace $\calX_* \subseteq \calX$ 
to the left: 
$G \times \calX_* \ni(g,x)\mapsto gx \in \calX_*$.  
Let $r: \calX_* \to G$ be a map which is equivariant: 
\begin{equation} 
\label{eq:rgx=grx} 
r(gx)=gr(x), \quad g\in G, \ x\in \calX_*. 
\end{equation} 
Define 
\begin{equation}  
\label{eq:def-of-z(x)} 
z(x):=\{ r(x)\}^{-1}x, 
\quad x \in \calX_*, 
\end{equation}  
and put 
\[ 
\calZ:= 
\{ z(x): x \in \calX_* \}. 
\]    
By the definition of $z( \, \cdot \, )$,  
we can write every $x\in \calX_*$ as 
$x=r(x)z(x)$. 
Furthermore, we can easily see that $\calZ$ is a cross section 
under the action of $G$ on $\calX_*$, i.e., 
$\#(\calZ \cap Gx)=1$ for each $x\in \calX_*$, 
where $Gx=\{ gx: g\in G\}$ is the orbit of $x$.   
Thus, when we write $x\in \calX_*$ as $x=gz, \ g\in G, \ z\in \calZ$, 
the point $z$ is unique, i.e., $z=z(x)$.  
We can also see that the existence of $r( \, \cdot \, )$ 
satisfying \eqref{eq:rgx=grx} 
implies the action of $G$ on $\calX_*$ is free so that 
for every $x \in \calX_*$, 
the element $g$ in $x=gz, \ g\in G, \ z\in \calZ$, is 
unique as well, i.e., $g=g(x)$. 
Hence $\calX_*$ is in one-to-one correspondence 
with $G\times \calZ$ 
(an orbital decomposition of $\calX_*$). 
Note that $z=z(x)$ is a maximal invariant under the action 
of $G$ on $\calX_*$. 

We assume 
i) $\calX$ is a locally compact Hausdorff space, 
ii) $\calX_*$ is a locally compact Hausdorff space 
(e.g., $\calX_*$ is open in $\calX$), 
iii) $H$ and $G$ are second countable, locally compact Hausdorff 
topological groups and 
iv) the action of $H$ (resp. $G$) on $\calX$ (resp. $\calX_*$) is continuous.  

Let $\lambda$ be a 
relatively invariant measure on  
$\calX$ under the action of $H$ 
with multiplier $\chi_H$: 
$\lambda(d(hx))=\chi_H(h)\lambda(dx), \ h \in H$. 
Suppose $\lambda$ 
restricted to 
$\calX_*$ 
is relatively invariant under the action of $G$ 
with multiplier $\chi_G$: 
$\lambda(d(gx))=\chi_G(g)\lambda(dx), \ g \in G$. 
We assume 
\begin{equation} 
\label{eq:l(X-hX*)=0}
\lambda(\calX \setminus h\calX_*)=0
\end{equation} 
for each $h\in H$, 
where $h\calX_*=\{ hx: x \in \calX_*\}$.  

Now, as a sampling model, 
we consider the distribution on $\calX_*$ 
with density, with respect to $\lambda$, of the form 
\begin{equation} 
\label{eq:p(x|hgf)} 
p(x | h, g; f)
=\frac{1}{ \chi_H(h) \chi_G(g) }
f(g^{-1}r(h^{-1}x)s(z(h^{-1}x))), 
\quad 
h \in H, \ g \in G
\end{equation} 
for some $s: \calZ \to G$ 
and $f: G \to \bbR_{\ge 0}:=\{ u \in \bbR: u\ge 0\}$ 
such that 
\begin{equation}
\label{eq:int-f-l=1} 
\int_{\calX_*} f(r(x)s(z(x)))\lambda(dx)=1. 
\end{equation}  
It may happen that $h^{-1}x \notin \calX_*$ for 
$x \in \calX_*$ and  $h \in H$, 
but this does not matter because of \eqref{eq:l(X-hX*)=0}. 
We assume there exist some compact subgroup 
$H_0$ of $H$ and some compact subgroup $G_0$ of $G$ 
such that 
\begin{gather}
f(g_0 g)=f(g) \text{ \ for all \ } g \in G, \ g_0 \in G_0, 
\notag 
\\ 
r(h_0x)=r(x) \text{ \ and \ } s(z(h_0x))=s(z(x)) 
\text{ \ for all \ } x\in \calX_*, \ h_0\in H_0 
\label{eq:r(h0x)=r(x)-s(z(h0x))=s(z(x))}
\end{gather} 
(again recall \eqref{eq:l(X-hX*)=0}). 
Thus, $p( \, \cdot \, | h, g; f)=p( \, \cdot \, | h', g'; f)$ 
if $([h], [g])=([h'], [g'])$, 
where $[h]:=hH_0$ and $[g]:=gG_0$ are the left cosets 
modulo $H_0$ and $G_0$, respectively. 
Note $\chi_H(h_0)=1$ for $h_0 \in H_0$ and 
$\chi_G(g_0)=1$ for $g_0\in G_0$ 
because $H_0$ and $G_0$ are compact subgroups 
(Wijsman \cite{Wij90}, Corollary 7.1.8). 
By abuse of notation, we will write \eqref{eq:p(x|hgf)} also as 
$p(x | [h], [g]; f)$.  

Furthermore, we assume the converse so that 
$p( \, \cdot \, | [h], [g]; f)$ and $p( \, \cdot \, | [h'], [g']; f)$ give the 
same distribution if and only if $([h], [g])=([h'], [g'])$. 
In other words, 
we will consider only those 
$f$s 
such that this equivalence holds true. 
Therefore, 
\[
\Theta:=H/H_0 \times G/G_0
\]  
can be regarded as 
the parameter space, where $H/H_0=\{ [h]: h \in H \}$ and 
$G/G_0=\{ [g]: g \in G\}$ are the left coset spaces.  
Suppose $[h]$ is the parameter of interest while $[g]$ is a 
nuisance parameter. 

We illustrate our discussions with the prototypical example of 
the $v$-spherical distribution. 

\begin{example} 
\label{ex:v-setup}

An $n$-dimensional random vector $x$ is said to 
be distributed as 
a $v$-spherical distribution 
(Fern\'{a}ndez, Osiewalski and Steel \cite{FOS95JASA}) 
with location $\mu\in \bbR^n$ and scale 
$\sigma \in \bbR_{>0}:=\{ g\in \bbR: g>0 \}$ 
if $x$ has density, with respect to Lebesgue 
measure, of the form 
\begin{equation}
\label{eq:v-density}
\frac{1}{\sigma^n}
f(v(\sigma^{-1}(x-\mu ))), 
\end{equation} 
where 
$v: \bbR^n\setminus \{ 0\}\to \bbR_{>0}$ 
satisfies 
\[ 
v(gx)=gv(x) \text{ \ \ for \ } x \in \bbR^n\setminus \{ 0\}, \ g \in \bbR_{>0}. 
\] 
Note that the $v$-spherical distribution with 
$v(x)=(x^T \Sigma_0^{-1}x)^{1/2}$ for some 
$\Sigma_0 \in {\rm PD}_n$ 
(the set of $n\times n$ positive definite matrices) 
is an elliptical distribution. 
Now we will see that $v$-spherical distributions can be regarded as 
\eqref{eq:p(x|hgf)}. 

Let  
$\calX=\bbR^n$, 
$\calX_* =\bbR^n\setminus \{ 0\}$, 
$H=\bbR^n$ 
(regarded as the additive group under vector addition) 
and 
$G=\bbR_{>0}$ 
(the multiplicative group of positive reals). 
(We make no notational distinction between a group 
and its underlying set.) 
Consider the actions 
\[ 
H\times \calX \ni (\mu, x)\mapsto x+\mu \in \calX 
\]  
and 
\[ 
G\times \calX_* \ni (g, x)\mapsto gx \in \calX_* 
\text{ (scalar multiplication)}. 
\] 
Let  
$\lambda(dx)=dx$ (Lebesgue measure).  
Then 
$\chi_H(\mu)=1$ and 
$\chi_G(g)=g^{n}$. 

Let us take 
$r(x)=v(x), \ x \in \calX_*$. 
In that case,  
\[ 
\calZ=\left\{ z(x): x \in \bbR^n\setminus \{0\}\right\} 
\text{ with } z(x)= \frac{1}{v(x)} x 
\] 
is the boundary of the star-shaped set 
(with respect to the origin) determined by $v( \, \cdot \, )$. 
Let 
$s( \, \cdot \, )=1$. 
Then 
$p(x | \mu, \sigma; f), 
\ \mu \in H, \ \sigma \in G$, 
in \eqref{eq:p(x|hgf)} is  
\eqref{eq:v-density}.  

In the case of the $v$-spherical distribution, 
$H_0$ and $G_0$ are trivial. 
Examples with non-trivial $H_0$ or $G_0$ will be 
given in Section \ref{sec:examples}. 

\end{example} 

\section{Robustness of invariant statistics} 
\label{sec:r-of-i.s.}

In this section, we will see the null robustness of the distribution of 
$z(x)$ when $x$ is distributed according to our sampling model  
\eqref{eq:p(x|hgf)}. 
Namely, the distribution of $z(x)$ will be shown not to depend on 
the density generator $f$ when the parameter of interest, $[h]$, 
is the coset containing the identity element. 
Moreover, we will prove that the distribution of any statistic 
invariant under the actions of $G$ and $H$ does not depend on 
$f$ under general values of $[h]$. 

It is known 
(Farrell \cite[Theorem 10.1.2]{Far85}, 
Wijsman \cite[Theorem 7.5.1]{Wij90}) 
that under weak regularity conditions,  
there exists a measure $\nu_{\calZ}$ on $\calZ$ such that 
\begin{equation}
\label{eq:lambda-decomposition}
\lambda = \chi_G \mu_G \otimes \nu_{\calZ}
\end{equation} 
with respect to the one-to-one correspondence 
\[ 
\calX_* \ni x \leftrightarrow (r,z)\in G \times \calZ, 
\] 
where $\mu_G$ is a left invariant measure on $G$, 
which is unique up to a multiplicative constant.   
(In the expression \eqref{eq:lambda-decomposition}, we are identifying 
$\calX_*$ with $G \times \calZ$.) 
We assume throughout 
that the decomposition \eqref{eq:lambda-decomposition} holds true. 
Note that $\nu_{\calZ}$ appears in the decomposition of 
$\lambda$ and hence does not depend on $f$.  

Using \eqref{eq:lambda-decomposition},
we can prove the following theorem. 
We denote the identity element of $H$ by $e_H$.  

\begin{theorem} 
\label{th:null-robust}
Fix $r: \calX_* \to G$ satisfying \eqref{eq:rgx=grx}, 
and $s: \calZ \to G$. 
For $x \sim p(x | [h], [g]; f)\lambda(dx)$, 
the distribution of the statistic  
$z=z(x)$ is null robust in the sense that when 
$[h]=[e_H]=H_0$, 
the distribution of $z$ 
does not depend on $f$. 
Specifically,  when $[h]=[e_H]$, 
the distribution of $z$ has density 
$p_{\calZ}(z | [e_H], [g]; f)$ 
with respect to $\nu_{\calZ}$ given by 
\[
p_{\calZ}(z | [e_H], [g]; f)
=p_{\calZ}(z):=\frac{c}{\chi_G(s(z))\Delta_G(s(z))}, 
\]
where $\Delta_G$ is the right-hand modulus of $G$ 
(i.e., $\mu_G(d(gg'))=\Delta_G(g')\mu_G(dg), \ g' \in G$) 
and 
\begin{equation} 
\label{eq:c=intG=intZ} 
c=\int_G f(r)\chi_G(r)\mu_G(dr)
=\frac{1}{\int_{\calZ} \{ \chi_G(s(z))\Delta_G(s(z))\}^{-1} \nu_{\calZ}(dz)}.  
\end{equation}  
\end{theorem}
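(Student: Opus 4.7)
The plan is to compute the marginal density of $z=z(x)$ directly from the joint density using the decomposition \eqref{eq:lambda-decomposition}, and then to identify the normalizing constant from \eqref{eq:int-f-l=1}.

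First I would fix a representative of the coset $[h]=[e_H]$. Since $\chi_H(h_0)=1$ and \eqref{eq:r(h0x)=r(x)-s(z(h0x))=s(z(x))} imply that $p(\,\cdot\,|h,g;f)$ depends only on $[h]$, I may take $h=e_H$, so that
\[
p(x\mid e_H,g;f)=\frac{1}{\chi_G(g)}\,f\bigl(g^{-1}r(x)s(z(x))\bigr).
\]
Next, using the identification $\calX_*\leftrightarrow G\times\calZ$ via $x\leftrightarrow(r(x),z(x))$ together with $\lambda=\chi_G\mu_G\otimes\nu_{\calZ}$, the joint density of $(r,z)$ with respect to $\mu_G\otimes\nu_{\calZ}$ is $\chi_G(g)^{-1}f(g^{-1}rs(z))\chi_G(r)$. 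Marginalizing over $r\in G$ then gives
\[
p_{\calZ}(z\mid e_H,g;f)=\frac{1}{\chi_G(g)}\int_G f\bigl(g^{-1}rs(z)\bigr)\chi_G(r)\,\mu_G(dr).
\]

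The heart of the proof is the change of variables $r\mapsto r':=g^{-1}rs(z)$, i.e.\ $r=gr's(z)^{-1}$. Multiplicativity of $\chi_G$ gives $\chi_G(r)=\chi_G(g)\chi_G(r')\chi_G(s(z))^{-1}$; left invariance of $\mu_G$ kills the left factor $g$; and the defining property of the right-hand modulus yields $\mu_G(d(r's(z)^{-1}))=\Delta_G(s(z))^{-1}\mu_G(dr')$. Combining these, the factors of $\chi_G(g)$ cancel and the $z$-dependent pieces pull out of the integral, giving
\[
p_{\calZ}(z\mid e_H,g;f)=\frac{1}{\chi_G(s(z))\Delta_G(s(z))}\int_G f(r')\chi_G(r')\,\mu_G(dr').
\]
This is already the first formula for $c$ in \eqref{eq:c=intG=intZ}, and the dependence on $g$ and $f$ has disappeared except through this overall constant $c$, establishing null robustness.

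Finally, for the second equality in \eqref{eq:c=intG=intZ}, I would apply the decomposition \eqref{eq:lambda-decomposition} to the normalization \eqref{eq:int-f-l=1}, writing $1=\int_{\calZ}\int_G f(rs(z))\chi_G(r)\,\mu_G(dr)\nu_{\calZ}(dz)$, and repeat the same substitution $r\mapsto rs(z)$ inside; this turns the inner integral into $\chi_G(s(z))^{-1}\Delta_G(s(z))^{-1}\,c$, so that solving for $c$ gives exactly the reciprocal of $\int_{\calZ}\{\chi_G(s(z))\Delta_G(s(z))\}^{-1}\nu_{\calZ}(dz)$. The main obstacle is simply bookkeeping: keeping track of whether a Jacobian factor is produced by the multiplier $\chi_G$, by left invariance of $\mu_G$, or by the right-hand modulus $\Delta_G$, and making sure the cancellations line up with the statement.
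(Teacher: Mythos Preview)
Your proof is correct and follows essentially the same route as the paper: write the sampling density with $h=e_H$, pass to $(r,z)$-coordinates via the decomposition $\lambda=\chi_G\mu_G\otimes\nu_{\calZ}$, and perform the substitution $r'=g^{-1}rs(z)$, using left invariance of $\mu_G$ together with the right-hand modulus $\Delta_G$ to separate the $r'$- and $z$-parts. You are slightly more explicit than the paper in two places---you spell out the three ingredients of the Jacobian bookkeeping, and you actually derive the second equality in \eqref{eq:c=intG=intZ} from the normalization \eqref{eq:int-f-l=1}, which the paper only states---but the argument is the same.
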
 

\begin{proof} 
Since $[h]=[e_H]$, we have 
\begin{eqnarray} 
\label{eq:(1/chi)fchimunu} 
x 
&\sim& \frac{1}{ \chi_G(g) } f(g^{-1}r(x)s(z(x)))\lambda(dx)  \nonumber \\ 
&=& \frac{1}{ \chi_G(g) } f(g^{-1}rs(z)) \chi_G(r)\mu_G(dr)\nu_{\calZ}(dz). 
\end{eqnarray}
Putting $r'=g^{-1}rs(z)$, we can write \eqref{eq:(1/chi)fchimunu} as 
\begin{gather*} 
\frac{1}{\chi_G(g)}f(r')\chi_G(gr'(s(z))^{-1})\Delta_G((s(z))^{-1})\mu_G(dr') \nu_{\calZ}(dz) \\ 
= f(r')\chi_G(r')\mu_G(dr') \times \frac{1}{\chi_G(s(z))\Delta_G(s(z))}\nu_{\calZ}(dz). 
\end{gather*} 
Hence, the density of $z=z(x)$ with respect to $\nu_{\calZ}$ is 
\[
p_{\calZ}(z | [e_H], [g]; f)=\frac{c}{\chi_G(s(z))\Delta_G(s(z))}
\] 
with $c$ given in 
\eqref{eq:c=intG=intZ}. 
\end{proof} 

We note that $p_{\calZ}(z | [e_H], [g]; f)=p_{\calZ}(z)$ does not depend 
on $[g]$ either. 
Denote the identity element of $G$ by $e_G$. 
When $[g]$ in $p(x | [h], [g]; f)$ 
is restricted to be $[e_G]$, 
Theorem \ref{th:null-robust} is essentially the same as 
Theorem 3.3 of Kamiya, Takemura and Kuriki \cite{KTK08}. 

\begin{example} 
\label{ex:v-null}
{\bf (Continuing Example \ref{ex:v-setup}.)} 
Since $G=\bbR_{>0}$ is commutative, we have 
$\Delta_G=1$.  

When $\mu=0$, 
we have from Theorem \ref{th:null-robust} that 
$p_{\calZ}(z | 0, \sigma; f)=p_{\calZ}(z)=c$, 
so $z \sim c \, \nu_{\calZ}(dz)$. 
For $\mu_G(dg)$, we will take the version 
$\mu_G(dg)
=g^{-1}dg$, 
so $\nu_{\calZ}$ is such that 
\[ 
dx
=g^n \cdot \frac{dg}{g} \cdot \nu_{\calZ}(dz)
=g^{n-1}dg \, \nu_{\calZ}(dz)
\] 
for $x=gz$, 
and 
$c=\int_{0}^{\infty}f(r)r^{n-1}dr=1/\nu_{\calZ}(\calZ)$.  

It is known (Kamiya, Takemura and Kuriki \cite{KTK08}) 
that when $v( \, \cdot \, )$ is piecewise 
of class $C^1$, 
the measure $\nu_{\calZ}$ is given by 
\[ 
\nu_{\calZ}(dz)=\langle z, n_z \rangle \lambda_{\calZ}(dz), 
\] 
where $\lambda_{\calZ}(dz)$ is the volume element of $\calZ$,  
the vector $n_z$ is the outward unit normal vector of $\calZ$ at $z$ 
and   
$\langle \, \cdot \, , \, \cdot \, \rangle$ denotes the standard 
inner product of $\bbR^n$. 
So in that case, 
\[
z \sim c \, \langle z, n_z \rangle \lambda_{\calZ}(dz), 
\quad 
c=\int_0^{\infty}f(r)r^{n-1}dr
=\frac{1}{\nu_{\calZ}(\calZ)}
=\frac{1}{\int_{\calZ}\langle z, n_z \rangle \lambda_{\calZ}(dz)}. 
\]

Let us consider the elliptical case: 
$v(x)=(x^T \Sigma_0^{-1}x)^{1/2}$.  
We have 
$n_z=(1/\| \Sigma_0^{-1}z \|)\Sigma_0^{-1}z$, 
so  
$\langle z, n_z \rangle = (z^T\Sigma_0^{-2}z)^{-1/2}$ 
(notice $v(z)=1$ for $z \in \calZ$). 
Since 
\[
c=\frac{1}{\int_{\calZ}(z^T\Sigma_0^{-2}z)^{-1/2}\lambda_{\calZ}(dz)}
=\int_0^{\infty}f(r)r^{n-1}dr
\]  
is independent of the choice of $f$, 
we can obtain the value of $c$ by considering 
the particular case of normality as 
\[ 
c=\frac{1}{\omega_n (\det \Sigma_0)^{1/2}}, 
\] 
where $\omega_n=2\pi^{n/2}/\Gamma(n/2)$ is the total volume of 
$\bbS^{n-1}$ (the unit sphere in $\bbR^n$). 
Thus 
\[ 
z\sim \frac{1}{\omega_n (\det \Sigma_0)^{1/2}(z^T\Sigma_0^{-2}z)^{1/2}}
\lambda_{\calZ}(dz) 
\]
in this case. 
See Section 4 of Kamiya, Takemura and Kuriki \cite{KTK08} 
for details. 

\end{example} 

In Examples \ref{ex:v-setup} and \ref{ex:v-null}, 
we considered the distribution of $z(x)=\{ v(x)\}^{-1}x$ when 
$x$ has a $v$-spherical distribution (with $\mu=0$). 
Theorem \ref{th:null-robust} can also 
yield the distribution of the usual direction of $x$ 
when $x$ has 
a $v$-spherical distribution: 

\begin{example} 
\label{ex:v-direction} 
{\bf (Continuing Example \ref{ex:v-setup}.)} 
In the situation of Example \ref{ex:v-setup}, 
let us take, instead, 
\begin{gather*} 
r(x)
= \| x \|, \quad x \in \calX_*, \\  
s(z)
= v(z), \quad z \in \calZ=\bbS^{n-1}. 
\end{gather*} 
In that case, 
\[ 
r(x) s(z(x))=\| x\| \, v\left( \frac{1}{\| x\|}x\right) =v(x), 
\] 
so 
$p(x | \mu, \sigma; f), \  
\mu \in H, \ \sigma \in G$, in 
\eqref{eq:p(x|hgf)} 
in this case is also 
\eqref{eq:v-density}. 
(See also Ferreira and Steel \cite{FS05JRSS(B)}.) 
But now 
Theorem \ref{th:null-robust} gives the 
distribution of the usual direction $z(x)=\| x \|^{-1}x \in \bbS^{n-1}$ 
when 
$x$ has a $v$-spherical distribution with $\mu=0$: 
since $p_{\calZ}(z)=c \, \{ v(z)\}^{-n}$ and 
\[ 
\nu_{\bbS^{n-1}}(dz)
=\langle z, z \rangle \lambda_{\bbS^{n-1}}(dz) = \lambda_{\bbS^{n-1}}(dz), 
\]  
we have 
\[ 
z=z(x)=\frac{1}{ \| x\|}x \sim c \, \{ v(z)\}^{-n}\lambda_{\bbS^{n-1}}(dz), 
\quad 
c=\int_0^{\infty}f(r)r^{n-1}dr
=\frac{1}{\int_{\bbS^{n-1}}
\{ v(z) \}^{-n}
\lambda_{\bbS^{n-1}}(dz)}. 
\] 
In the elliptical case: $v(x)=(x^T \Sigma_0^{-1}x)^{1/2}$, 
we have 
\begin{equation} 
\label{eq:dist-dir-under-elliptical} 
z=z(x)=\frac{1}{ \| x\|}x 
\sim c \, 
(z^T \Sigma_0^{-1}z)^{-n/2}
\lambda_{\bbS^{n-1}}(dz)
\end{equation}  
with 
\[
c=\frac{1}{\int_{\bbS^{n-1}}
(z^T \Sigma_0^{-1}z)^{-n/2}
\lambda_{\bbS^{n-1}}(dz)}
=\int_0^{\infty}f(r)r^{n-1}dr
=\frac{1}{\omega_n (\det \Sigma_0)^{1/2}}. 
\] 
The distribution in \eqref{eq:dist-dir-under-elliptical} is 
noted in King \cite[p.1266]{King80}. 
See also the derivation of this distribution 
in Watson \cite[pp.109--110]{Wat83}. 

\end{example} 

Since $z=z(x)$ is a maximal invariant under the action of $G$, 
any invariant statistic $t=t(x), \ x\in \calX_*$,  
under the action of $G$ 
(i.e., $t(gx)=t(x), \ g \in G, \ x \in \calX_*$) is 
a function of $z=z(x)$, 
and thus the distribution of $t$ is also null robust.  
Moreover, when $t$ is invariant under the action of $H$ as well, 
$t(hx)=t(x), \ h \in H, \ x \in \calX_*$ 
(recall \eqref{eq:l(X-hX*)=0}),  
the robustness of the distribution of $t$ is valid for general 
$[h] \in H/H_0$:  

\begin{co} 
\label{co:inv-under-GandH} 
Fix $r: \calX_* \to G$ satisfying \eqref{eq:rgx=grx} 
and $s: \calZ \to G$, and 
assume $x \sim p(x | [h], [g]; f) \lambda(dx)$. 
Suppose a statistic $t=t(x)$ 
is invariant under the action of $H$ 
as well as under the action of $G$. 
Then the distribution of $t$ does not depend on $f$. 
\end{co}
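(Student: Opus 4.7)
The plan is to reduce the general case $[h]\in H/H_0$ to the null case $[h]=[e_H]$ by making a change of variables $y=h^{-1}x$, and then to invoke Theorem \ref{th:null-robust} together with the $G$-invariance of $t$.

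First I would verify the push-forward identity: if $x\sim p(x\,|\,[h],[g];f)\lambda(dx)$, then $y:=h^{-1}x$ is distributed according to $p(y\,|\,[e_H],[g];f)\lambda(dy)$. This is a direct computation using the relative invariance $\lambda(d(hy))=\chi_H(h)\lambda(dy)$: substituting $x=hy$ into the density \eqref{eq:p(x|hgf)} replaces $r(h^{-1}x)$ by $r(y)$ and $z(h^{-1}x)$ by $z(y)$, while the Jacobian factor $\chi_H(h)$ exactly cancels the $1/\chi_H(h)$ in front. The subtlety that $h^{-1}x$ need not lie in $\calX_*$ is absorbed by the assumption \eqref{eq:l(X-hX*)=0}, which guarantees that the change of variables is valid off a $\lambda$-null set.

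Next I would use the two invariance hypotheses on $t$. By the $H$-invariance $t(hy)=t(y)$, we have $t(x)=t(y)$ almost surely. By the $G$-invariance, $t$ factors through the maximal invariant $z$: since $y=r(y)z(y)$ with $r(y)\in G$, we obtain $t(y)=t(z(y))$, so $t(x)$ has the same distribution as $(t\circ z)(y)$ with $y\sim p(\,\cdot\,|\,[e_H],[g];f)\lambda(dy)$.

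Finally, I would apply Theorem \ref{th:null-robust}, which asserts that the distribution of $z(y)$ under $p(\,\cdot\,|\,[e_H],[g];f)\lambda(dy)$ is $p_{\calZ}(z)\nu_{\calZ}(dz)$, independent of $f$. Pushing this forward by the (measurable) map $t$ yields a distribution for $t(x)$ that is also independent of $f$, completing the argument. The only mildly delicate step is the change-of-variables identification in paragraph one, since one must carefully track the multipliers and use \eqref{eq:l(X-hX*)=0} to dispose of the set where $h^{-1}x\notin\calX_*$; once that is settled, the remainder is a straightforward chain of invariance reductions.
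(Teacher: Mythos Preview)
Your proposal is correct and follows essentially the same route as the paper's proof: translate by $h^{-1}$ to reduce to the null case $[h]=[e_H]$, use the $H$-invariance of $t$ to identify $t(x)$ with $t(h^{-1}x)$, use the $G$-invariance to factor through $z$, and then invoke Theorem~\ref{th:null-robust}. The paper is marginally more explicit about the coset representative (it picks an arbitrary $\tilde{h}\in[h]$ and uses compactness of $H_0$ together with \eqref{eq:r(h0x)=r(x)-s(z(h0x))=s(z(x))} to check independence of the choice), but this is a cosmetic difference, not a substantive one.
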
 

\begin{proof} 
The statistic $t=t(x)$, being invariant under the action of $G$, 
depends on $x$ only through $z(x)$: 
$t=t(x)=\tilde{t}(z(x))$ for some $\tilde{t}: \calZ \to t(\calX_*)$. 
For an arbitrary $\tilde{h}\in [h]$, let $x_0:=\tilde{h}^{-1}x$. 

Because of the invariance of  $t(x)$ under the action of $H$, 
we can write 
\begin{equation} 
\label{eq:t=tzx0}  
t=t(x)
=t(x_0)=\tilde{t}(z(x_0)). 
\end{equation}  

Now, write $\tilde{h}=hh_0, \ h_0\in H_0$. 
Since $H_0$ is compact, we have $\chi_H(h_0)=1$. 
Using this and \eqref{eq:r(h0x)=r(x)-s(z(h0x))=s(z(x))}, 
we obtain from \eqref{eq:p(x|hgf)} that 
\begin{eqnarray} 
\label{eq:dist-of-x0} 
x_0 \sim \frac{1}{\chi_G(g)}f(g^{-1}r(x_0)s(z(x_0)))\lambda(dx_0) 
= p(x_0 | [e_H], [g]; f)\lambda(dx_0). 
\end{eqnarray} 
Note that this distribution of $x_0=\tilde{h}^{-1}x$ 
does not depend on the choice of $\tilde{h}\in [h]$. 
By \eqref{eq:dist-of-x0} and Theorem \ref{th:null-robust}, 
the distribution of $z(x_0)$ does not depend on $f$. 

This fact, together with \eqref{eq:t=tzx0}, implies that  
the distribution of $t$ does not depend on $f$.  
\end{proof} 

In Example \ref{ex:v-setup}, 
the action of $H$ on $\calX$ is 
transitive, so statistics invariant under the action of $H$ 
are constants. 
An example of Corollary \ref{co:inv-under-GandH} is given by 
the normalized residual vector 
in the linear regression model with a $v$-spherical error distribution: 

\begin{example} 
\label{ex:v-regression} 
{\bf (Continuing Example \ref{ex:v-setup}.)} 
Let us consider the linear regression model 
with a $v$-spherical error distribution: 
\begin{gather*}
y = X\beta + \epsilon, 
\quad \epsilon \sim \frac{1}{\sigma^n}f(v(\sigma^{-1}\epsilon))d\epsilon, \\ 
y \in \bbR^n, 
\ X\in {\rm Mat}_{n\times k}, \ \rank X=k, 
\ \beta \in \bbR^k, 
\ \epsilon \in \bbR^n,  
\ \sigma>0,  
\end{gather*} 
where ${\rm Mat}_{n\times k}$ denotes the set of 
$n \times k$ matrices with real entries. 
Since $y \sim \sigma^{-n}f(v(\sigma^{-1}(y-X\beta)))dy$, 
we can deal with this model for $y$ with the same setting as that of 
Example  \ref{ex:v-setup}, 
but with $H$ and its action on $\calX=\{ y: y\in \bbR^n \}$ 
replaced by 
\[
H=\bbR^k, 
\quad 
H \times \calX \ni (\beta, y)\mapsto y+X\beta \in \calX. 
\] 
It is easy to see that the normalized residual vector 
\[  
t(y):=
\begin{cases} 
\frac{1}{\| e \| }e & \text{if \ $e \ne 0$,} \\ 
0 \in \bbR^n & \text{if \ $e = 0$,} 
\end{cases} 
\qquad 
e:=(I_n-X(X^TX)^{-1}X^T)y 
\] 
($I_n$: the $n\times n$ identity matrix), 
is invariant under the actions of $G=\bbR_{>0}$ and $H=\bbR^k$: 
\[
t(gy)=t(y), \ g>0, \quad t(y+X\beta)=t(y), \ \beta \in \bbR^k. 
\]
By Corollary \ref{co:inv-under-GandH}, the distribution of 
$t(y)=(1/\| e\|)e$ is the same for all 
$f$.  
In the elliptical case, 
a similar argument can be found in Section 3 of King \cite{King80}. 

\end{example} 

\section{Marginal equivalence} 
\label{sec:ME}

In this section, we establish marginal equivalence 
when the prior for $[g]$ is relatively invariant. 

Consider a (proper or improper) prior 
on $\Theta=H/H_0 \times G/G_0$ of the form 
\begin{equation} 
\label{eq:Pi=Pi*mu}
\Pi_{H/H_0} \otimes \tilde{\mu}_{G/G_0}. 
\end{equation} 
In \eqref{eq:Pi=Pi*mu}, 
$\Pi_{H/H_0}$ is a 
(proper or improper) prior on $H/H_0$, 
and $\tilde{\mu}_{G/G_0}:=\pi (m\mu_G)$ 
is a relatively invariant measure 
on $G/G_0$ with multiplier $m$: 
\[ 
\tilde{\mu}_{G/G_0}(d(g'[g])) 
=\tilde{\mu}_{G/G_0}(d[g'g])
=m(g')\tilde{\mu}_{G/G_0}(d[g]), \quad g' \in G, 
\] 
where $\pi: G \to G/G_0$ is the coset projection 
(Wijsman \cite{Wij90}, Corollary 7.4.4). 
The measure $\tilde{\mu}_G:=m\mu_G$ is a left invariant measure on $G$ 
if $m= 1$ and 
a right invariant measure on $G$ if $m=1/\Delta_G$.  

Let us consider the Bayesian model 
\begin{equation}
\label{eq:BM}
\left( p(x | [h], [g]; f), \ \Pi_{H/H_0} \otimes \tilde{\mu}_{G/G_0}\right) 
\end{equation}
with sampling model $p(x | [h], [g]; f)$ 
in \eqref{eq:p(x|hgf)} and 
prior 
$
\Pi_{H/H_0} \otimes \tilde{\mu}_{G/G_0}$. 
The density kernel 
\begin{equation} 
\label{eq:def-of-density-kernel}
p(x, [h] | f):=\int_{G/G_0} p(x | [h], [g]; f) \tilde{\mu}_{G/G_0}(d[g])
\end{equation} 
of $(x, [h])$ 
with respect to 
$\lambda \otimes \Pi_{H/H_0}$ 
is obtained in the following theorem. 

\begin{theorem} 
\label{th:p(x,h)}
For the Bayesian model 
\eqref{eq:BM}, 
assume 
$\int_G \chi_G(g^{-1})f(g^{-1})\tilde{\mu}_G(dg)<\infty$, 
or equivalently 
$\int_G \chi_G(g)f(g) \{ m(g) \Delta_G(g)\}^{-1}\mu_G(dg)<\infty$.  
Then the density kernel 
$p(x, [h] | f)$ of $(x, [h])$ 
with respect to 
$\lambda \otimes \Pi_{H/H_0}$ 
is proportional to $p(x, [h])$ defined by 
\begin{equation} 
\label{eq:density-kernel-of-(x,[h])-general-result} 
p(x, [h])
:=\frac{1}
{\chi_H(h) \tilde{\chi}_G(r(h^{-1}x))\tilde{\chi}_G(s(z(h^{-1}x)))}, 
\end{equation} 
where $\tilde{\chi}_G:=\chi_G/m$. 
\end{theorem}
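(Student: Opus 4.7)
The plan is to substitute the explicit form \eqref{eq:p(x|hgf)} into \eqref{eq:def-of-density-kernel} and reduce the integral over $[g]\in G/G_0$ to a single change of variable on $G$. Setting $w=w(x,[h]):=r(h^{-1}x)\,s(z(h^{-1}x))$, the sampling density reads $\chi_H(h)^{-1}\chi_G(g)^{-1}f(g^{-1}w)$, so that
\[
p(x,[h]\,|\,f)=\frac{1}{\chi_H(h)}\int_{G/G_0}\frac{f(g^{-1}w)}{\chi_G(g)}\,\tilde{\mu}_{G/G_0}(d[g]).
\]

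First I would check that the integrand is right $G_0$-invariant in $g$, so that the integral lifts from $G/G_0$ to $G$. Indeed $\chi_G(gg_0)=\chi_G(g)$ because $\chi_G\equiv 1$ on the compact subgroup $G_0$, and $f((gg_0)^{-1}w)=f(g_0^{-1}g^{-1}w)=f(g^{-1}w)$ by the standing left $G_0$-invariance of $f$. Since $\tilde{\mu}_{G/G_0}=\pi(m\mu_G)$, unfolding the quotient converts the integral, up to a positive multiplicative constant (the Haar mass of $G_0$) that can be absorbed into the proportionality, into $\int_G m(g)\chi_G(g)^{-1}f(g^{-1}w)\,\mu_G(dg)$.

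Next I would apply the substitution $g=wv^{-1}$, equivalently $v=g^{-1}w$. Left-invariance of $\mu_G$ together with the standard identity $\mu_G(dv^{-1})=\Delta_G(v)^{-1}\mu_G(dv)$ yields $\mu_G(dg)=\Delta_G(v)^{-1}\mu_G(dv)$, and because $m$ and $\chi_G$ are continuous homomorphisms $G\to\bbR_{>0}$, we have $m(wv^{-1})=m(w)/m(v)$ and $\chi_G(wv^{-1})=\chi_G(w)/\chi_G(v)$. The integral collapses to
\[
\frac{m(w)}{\chi_G(w)}\int_G\frac{\chi_G(v)\,f(v)}{m(v)\,\Delta_G(v)}\,\mu_G(dv),
\]
in which the second factor is a finite positive constant independent of $x$ and $[h]$ by the stated integrability hypothesis (this is exactly the equivalent form displayed in the theorem).

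Finally, since $\tilde{\chi}_G=\chi_G/m$ is itself a continuous homomorphism $G\to\bbR_{>0}$, it factors as $\tilde{\chi}_G(w)=\tilde{\chi}_G(r(h^{-1}x))\,\tilde{\chi}_G(s(z(h^{-1}x)))$; combined with the prefactor $\chi_H(h)^{-1}$ this yields exactly \eqref{eq:density-kernel-of-(x,[h])-general-result}. The main obstacle I anticipate is the bookkeeping around the modular function: correctly tracking $\Delta_G$ through the inversion step, and verifying rigorously that the ``lift'' from $G/G_0$ to $G$ introduces only a scalar independent of $x$ and $[h]$. The latter ultimately rests on the compactness of $G_0$ (forcing $\chi_G$ and $m$ to be trivial on $G_0$) together with the imposed left $G_0$-invariance of $f$, so that every $G_0$-direction in the fiber contributes identically and can be integrated out once and for all.
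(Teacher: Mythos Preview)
Your proof is correct and follows essentially the same route as the paper: lift the integral from $G/G_0$ to $G$ via $\tilde{\mu}_{G/G_0}=\pi(\tilde{\mu}_G)$, then change variables so that the $(x,[h])$-dependence factors out as $m(w)/\chi_G(w)=1/\tilde{\chi}_G(w)$ times a finite constant integral. The only cosmetic difference is that the paper uses the left translation $g'=(r's')^{-1}g$ together with the relative invariance $\tilde{\mu}_G(d(wg'))=m(w)\,\tilde{\mu}_G(dg')$, which avoids the inversion step and the explicit appearance of $\Delta_G$; your substitution $g=wv^{-1}$ reaches the same endpoint after one extra application of $\mu_G(dv^{-1})=\Delta_G(v)^{-1}\mu_G(dv)$.
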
 

Note that $\chi_H(h)$ depends on $h\in H$ only through 
$[h]$ because $H_0$ is compact. 
Also, recall \eqref{eq:r(h0x)=r(x)-s(z(h0x))=s(z(x))} 
so that each of $\tilde{\chi}_G(r(h^{-1}x))$ and 
$\tilde{\chi}_G(s(z(h^{-1}x)))$ depends on $h\in H$ only 
through $[h]$.  

If $\Pi_{H/H_0}$ has a density $p_{H/H_0}$ with 
respect to a dominating measure $\lambda_{H/H_0}$, 
the density kernel of $(x, [h])$ with respect to 
$\lambda \otimes \lambda_{H/H_0}$ is 
proportional to the right-hand side of 
\eqref{eq:density-kernel-of-(x,[h])-general-result} 
multiplied by $p_{H/H_0}([h])$. 

\begin{proof} 
Since $\tilde{\mu}_{G/G_0}$ is the induced measure 
$\tilde{\mu}_{G/G_0}=\pi(\tilde{\mu}_G)$, 
we can express $p(x, [h] | f)$ 
in \eqref{eq:def-of-density-kernel} as 
\begin{eqnarray*}  
p(x, [h] | f)
&=& \int_G p(x | [h], [g]; f)\tilde{\mu}_G(dg) \\ 
&=& 
\frac{1}{\chi_H(h)}
\int_G 
\frac{1}{\chi_G(g)} 
f(g^{-1}r(h^{-1}x)s(z(h^{-1}x)))\tilde{\mu}_G(dg). 
\end{eqnarray*}  
Writing 
$r'=r'(x, [h]):=r(h^{-1}x)$ 
and 
$s'=s'(x, [h]):=s(z(h^{-1}x))$ 
(recall \eqref{eq:r(h0x)=r(x)-s(z(h0x))=s(z(x))}), 
we can calculate 
this integral 
as 
\begin{eqnarray*} 
p(x, [h] | f) 
&=& \frac{1}{\chi_H(h)}
\int_G 
\chi_G(g^{-1}) 
f(g^{-1}r's') \tilde{\mu}_G(dg) \\ 
&=& \frac{m(r's')}{\chi_H(h) \chi_G(r's')} 
\int_G 
\chi_G((g')^{-1})
f((g')^{-1})\tilde{\mu}_G(dg'), 
\quad g'=(r's')^{-1}g. 
\end{eqnarray*} 
\end{proof} 

\begin{rem}
Since $f$ is assumed to satisfy \eqref{eq:int-f-l=1}, 
we have $\int_G f(r) \chi_G(r) \mu_G(dr)<\infty$. 
Hence, if 
i) $\Delta_G=1, \ m=1$ ($G$ is unimodular and $\tilde{\mu}_G$ is 
invariant) 
or more generally 
ii) $m\Delta_G=1$ ($\tilde{\mu}_G$ is right invariant), 
then the integrability condition in Theorem \ref{th:p(x,h)} 
is automatically satisfied.  
\end{rem} 

Recall that $[h]$ is the parameter of interest. 
From Theorem \ref{th:p(x,h)}, we see that 
for fixed $r: \calX_* \to G$ satisfying \eqref{eq:rgx=grx} 
and $s: \calZ \to G$, 
the Bayesian models 
\eqref{eq:BM} 
with different $f$s 
lead to the same 
$p(x, [h])$, 
and thus are all marginally equivalent 
in the sense of 
\cite{OS93JE59} 
and 
\cite{FOS95JASA}. 
Note that 
$\lambda \otimes \Pi_{H/H_0}$ 
does not depend on $f$. 

Suppose, moreover, that 
\[
p(x):=\int_{H/H_0} p(x, [h])\Pi_{H/H_0}(d[h])<\infty. 
\] 
Then the posterior $p([h] | x):=p(x, [h])/p(x)$ is 
proper: $\int_{H/H_0} p([h] | x)\Pi_{H/H_0}(d[h])=1$.  
In that case, 
$p(x)$ and $p([h] | x)$, like $p(x, [h])$,   
do not depend on $f$. 

\begin{example} 
\label{ex:v-ME} 
{\bf (Continuing Examples \ref{ex:v-setup} and \ref{ex:v-null}.)} 
Recall $\mu_G(dg)=\mu_{ \bbR_{>0}}(dg)
=g^{-1}dg$. 
For an arbitrary prior $\Pi_{H}$ on $H=\bbR^n$, 
we have from Theorem \ref{th:p(x,h)} that 
for the Bayesian model 
\[ 
\left( p(x | \mu, \sigma; f), \ \Pi_{\bbR^n} \otimes m \mu_{\bbR_{>0}}\right),  
\]  
the density kernel of 
$(x, \mu)$ with respect to $\lambda \otimes 
\Pi_{\bbR^n}$ 
($\lambda$: Lebesgue measure) 
is proportional to 
\[
p(x, \mu)=\frac{m(v(x-\mu))}{ \{ v(x-\mu)\}^{n}}. 
\] 
When $m=1$ (i.e., the prior on $G=\bbR_{>0}$ is noninformative), 
this reduces to 
Theorem 1 of Fern\'{a}ndez, Osiewalski and Steel \cite{FOS95JASA}.  

\end{example} 

\section{Examples} 
\label{sec:examples} 

We apply our general results to two examples other than the 
$v$-spherical distribution. 
We consider affine shapes in Subsection \ref{sec:affine-shapes} 
and PCA in Subsection \ref{sec:PCA}.  

\subsection{Affine shapes} 
\label{sec:affine-shapes}

We will consider null robustness (central configuration density)  
and marginal equivalence 
in the analysis of affine shapes. 

Let a real $N \times k$ matrix 
\[
X=
\begin{pmatrix} 
x_1^T \\ 
\vdots \\ 
x_N^T 
\end{pmatrix} 
\in {\rm Mat}_{N\times k}
\]
represent a geometric figure consisting of 
$N$ landmark points 
$x_1,\ldots,x_N$ in $\bbR^k$. 
Two figures 
$X \in {\rm Mat}_{N\times k}$ 
and 
$X' \in {\rm Mat}_{N\times k}$ 
are said to have 
the same affine shape (or the same configuration) iff 
\[
X=X' E^T + \iota_Nb^T
\] 
for some 
$b\in \bbR^k$ and 
nonsingular $E \in {\rm Mat}_{k \times k}$, 
where $\iota_N=(1,\ldots,1)^T\in \bbR^N$ 
(Goodall and Mardia \cite[Section 6]{GM93}, 
Caro-Lopera, D\'{i}az-Garc\'{i}a and Gonz\'{a}lez-Far\'{i}as 
\cite{CDG10}). 
Now, let $L \in {\rm Mat}_{n\times N}$ 
($n:=N-1$) 
be the submatrix of the Helmert matrix, 
consisting of $n$ orthonormal rows 
which are orthogonal to $\iota_N^T$. 
Suppose $n-k\ge 1$. 
Then the configuration coordinates of a figure 
$X \in {\rm Mat}_{N\times k}$ are 
given by 
\[
\begin{pmatrix} 
I_k \\ 
V
\end{pmatrix}
\in {\rm Mat}_{n\times k}, 
\] 
where 
\[ 
V:=Y_2Y_1^{-1}
\] 
with 
\[
Y=
\begin{pmatrix}
Y_1 \\ 
Y_2 
\end{pmatrix}
:=LX \in {\rm Mat}_{n\times k}, 
\quad  
Y_1\in {\rm Mat}_{k\times k}, 
\ 
Y_2 \in {\rm Mat}_{(n-k)\times k} 
\] 
when 
$Y_1$ is nonsingular 
(Goodall and Mardia \cite[Section 6]{GM93}, 
Caro-Lopera, D\'{i}az-Garc\'{i}a and Gonz\'{a}lez-Far\'{i}as 
\cite{CDG10}). 

We assume $Y$ is distributed with density 
\begin{equation} 
\label{eq:affine-dist-of-Y} 
\frac{1}{(\det \Sigma_0)^{k/2} (\det \Phi)^{n/2}}
\tilde{f}((Y-M)^T\Sigma_0^{-1}(Y-M)\Phi^{-1}), 
\quad M\in {\rm Mat}_{n\times k}, \ \Phi \in {\rm PD}_k,  
\end{equation} 
with respect to Lebesgue measure, 
where $\Sigma_0 \in {\rm PD}_n$ is known and 
$\tilde{f}$ satisfies 
\begin{equation} 
\label{eq:fAB=fBA}
\tilde{f}(AB)=\tilde{f}(BA)
\end{equation}  
whenever $AB$ and $BA$ are defined 
(e.g., $\tilde{f}( \, \cdot \, )=h(\tr ( \, \cdot \, ))$ 
for some $h( \, \cdot \, )$).  
When 
$\Phi =I_k$ and $\tilde{f}( \, \cdot \, )=h(\tr ( \, \cdot \, ))$, 
this assumption about 
the distribution of $Y$ is the same as 
that of 
Caro-Lopera, D\'{i}az-Garc\'{i}a and Gonz\'{a}lez-Far\'{i}as 
\cite{CDG10}. 

As we will see, we can regard 
the configuration coordinates 
$(I_k, V^T)^T = (I_k, (Y_2 Y_1^{-1})^T)^T$ 
as the maximal invariant 
$z(x)$ in \eqref{eq:def-of-z(x)} 
and the density \eqref{eq:affine-dist-of-Y} 
as $p(x | h, g; f)$ in \eqref{eq:p(x|hgf)}. 

Let 
\[
\calX={\rm Mat}_{n\times k}, 
\quad 
\calX_*
=
\left\{ 
\begin{pmatrix}
Y_1 \\ 
Y_2
\end{pmatrix}: 
Y_1 \in {\rm Mat}_{k \times k}, \ 
\rank (Y_1)=k, \ 
Y_2 \in {\rm Mat}_{(n-k)\times k}
\right\}
\]  
and 
\begin{eqnarray*} 
H &=& {\rm Mat}_{n\times k} 
\ (\text{the additive group under matrix addition}), \\  
G &=& {\rm GL}_{k} \ (\text{the real general linear group}).  
\end{eqnarray*} 
Note $\Delta_G=1$.  
Consider the actions 
\[ 
H\times \calX \ni (M, Y)\mapsto Y+M \in \calX,  
\qquad 
G\times \calX_* \ni (E, Y)\mapsto YE^T \in \calX_*. 
\] 
We take 
$\lambda(dY)=(dY)$ (Lebesgue measure)  
so that 
$\chi_H(M)=1$ 
and 
$\chi_G(E)=|\det E \, |^{n}$. 

Let 
\[ 
r(Y):=Y_1^T
\ \text{ for } \ 
Y=
\begin{pmatrix}
Y_1 \\ 
Y_2 
\end{pmatrix}. 
\] 
Then 
\[
z(Y)=
\begin{pmatrix}
I_k \\ 
Y_2 Y_1^{-1}
\end{pmatrix} 
\ \text{ for } \ 
Y=
\begin{pmatrix}
Y_1 \\ 
Y_2 
\end{pmatrix} 
\]
and 
\[ 
\calZ=
\left\{ 
Z= 
\begin{pmatrix}
I_k \\ 
V
\end{pmatrix}: 
V \in {\rm Mat}_{(n-k)\times k}
\right\}. 
\] 
Thus the matrix 
$(I_k, V^T)^T = (I_k, (Y_2 Y_1^{-1})^T)^T$ 
consisting of the configuration coordinates 
is the maximal invariant $z(x)$ in \eqref{eq:def-of-z(x)}. 


Next, let us check that \eqref{eq:affine-dist-of-Y} can be 
regarded as $p(x | h, g; f)$  
in \eqref{eq:p(x|hgf)}. 
Let $S^{1/2}$ for $S \in {\rm PD}_k$ 
stand for the unique $T \in {\rm LT}_{k}$ 
(the group of  
$k\times k$ lower triangular 
matrices with 
positive diagonal elements) 
such that $S=TT^T$. 
Putting 
\[
s(Z):=(Z^T\Sigma_0^{-1}Z)^{1/2}
\in {\rm LT}_{k} \subset {\rm GL}_k=G, 
\quad 
Z \in \calZ, 
\]
and  
$f(E):=(\det \Sigma_0)^{-k/2}\tilde{f}(EE^T), 
\ E \in {\rm GL}_k$,  
we can express \eqref{eq:affine-dist-of-Y} as 
\begin{equation} 
\label{eq:affine-density-f=p} 
\frac{1}{(\det \Phi)^{n/2}}
f( \Phi^{-1/2} r(Y-M) s(z( Y-M  )))
=p(Y | M, \Phi^{1/2}; f)
\end{equation}  
in \eqref{eq:p(x|hgf)}, 
where 
$\Phi^{-1/2}:=(\Phi^{1/2})^{-1}$ 
(recall \eqref{eq:fAB=fBA}). 
For any $C \in {\rm O}_k$ (the orthogonal group),  
we have $f(CE)=f(E)$ (use \eqref{eq:fAB=fBA} again), 
so $G_0={\rm O}_k$ and we can write 
\eqref{eq:affine-density-f=p} as 
$p(Y | M, \Phi^{1/2}{\rm O}_k; f)$. 

The distribution of the configuration coordinates 
in the central case $M=0$ is obtained 
as follows. 
When $M=0$, 
we have from Theorem \ref{th:null-robust} that 
the density of $Z=z(Y)$ is 
$p_{\calZ}(Z | 0, E{\rm O}_k; f)=p_{\calZ}(Z)
=c \, \{ \det (Z^T\Sigma_0^{-1}Z)\}^{-n/2}
$ 
for any $E{\rm O}_k \in {\rm GL}_k/{\rm O}_k$ 
so 
\[
Z 
\sim 
\frac{c}{\{ \det (Z^T\Sigma_0^{-1}Z) \}^{n/2}} 
\nu_{\calZ}(dZ). 
\]
Since 
\[
(dY)=|\det E \, |^{n-k}(dE) \, (dV) 
\ \text{ for }  
Y=
\begin{pmatrix} 
I_k \\ 
V
\end{pmatrix} 
E^T, 
\] 
we see that  
\[ 
\nu_{\calZ}(dZ)=(dV) \ \text{ for } \ 
Z=
\begin{pmatrix} 
I_k \\ 
V
\end{pmatrix} 
\] 
when we take the version 
\begin{equation} 
\label{eq:Haar-GLk}
\mu_G(dE)=\mu_{{\rm GL}_k}(dE)
=|\det E \, |^{-k}(dE).  
\end{equation} 
By considering the particular case of normality: 
$f(E)=(2\pi)^{-kn/2}(\det \Sigma_0)^{-k/2}{\rm etr}\{ -(1/2)EE^T\}$, 
i.e., $\tilde{f}(W)=(2\pi)^{-kn/2}{\rm etr}\{ -(1/2)W \}$ 
(Muirhead \cite{Muir82}, Theorem 3.1.1), 
we can easily obtain the normalizing constant as 
\begin{eqnarray*}
c
&=& \int_{{\rm GL}_k} f(E) \, |\det E \, |^{n} \mu_{{\rm GL_k}}(dE) \\ 
&=& 
\frac{1}{(2\pi)^{kn/2}(\det \Sigma_0)^{k/2}} 
\, \mu_{{\rm O}_k}({\rm O}_k) 
\int_{{\rm LT}_k}{\rm etr}\left(-\frac{1}{2}TT^T\right)  
\{ \det (TT^T) \}^{n/2}
\mu_{{\rm LT}_k}(dT) \\ 
&=& 
\frac{1}{(2\pi)^{kn/2}(\det \Sigma_0)^{k/2}} 
\frac{2^k \pi^{k^2/2}}{\Gamma_k(\frac{k}{2})}  
\int_{{\rm PD}_k}{\rm etr}\left( -\frac{1}{2}W \right)  (\det W)^{n/2}
2^{-k} (\det W)^{-(k+1)/2}
 (dW) \\ 
&=& \frac{\Gamma_k(\frac{n}{2})}
{\pi^{k(n-k)/2}(\det \Sigma_0 )^{k/2}\Gamma_k(\frac{k}{2})} 
\end{eqnarray*} 
for the version of $\mu_G$ in \eqref{eq:Haar-GLk}, 
where 
$\mu_{{\rm LT}_k}(dT)=\prod_{i=1}^k t_{ii}^{-i}(dT), \ T=(t_{ij})\in {\rm LT}_k$, 
the version of $\mu_{{\rm O}_k}$ is such that $\mu_{{\rm O}_k}(dC)=(dC)$ at 
$C =I_k$, 
and 
$\allowbreak 
\Gamma_k(a)=\int_{{\rm PD}_k} {\rm etr}(-W) (\det W)^{a-\{ (k+1)/2\}}(dW)
=\pi^{k(k-1)/4} \prod_{i=1}^k \Gamma (a-\{ (i-1)/2\}), 
\ {\rm Re}(a)>(k-1)/2$. 
(We have used Wijsman \cite{Wij90}, (7.7.10), (7.7.9), (5.3.16) and 
Muirhead \cite{Muir82}, Definition 2.1.10, Theorem 2.1.12.)
Hence 
\[
V \sim 
\frac{\Gamma_k(\frac{n}{2})}
{\pi^{k(n-k)/2}(\det \Sigma_0 )^{k/2}\Gamma_k(\frac{k}{2})} 
\left[ 
\det 
\left\{ 
(I_k, V^T)
\Sigma_0^{-1}
\begin{pmatrix} 
I_k \\ 
V
\end{pmatrix} 
\right\}
\right]^{-n/2} 
(dV)  
\] 
in the central case $M=0$. 
When $\Phi =I_k$ and 
$\tilde{f}( \, \cdot \, )=h(\tr ( \, \cdot \, ))$, 
this agrees with Corollary 10 of 
Caro-Lopera, D\'{i}az-Garc\'{i}a and Gonz\'{a}lez-Far\'{i}as 
\cite{CDG10}. 

Next we move on to marginal equivalence, 
supposing that we are interested in $M$ but not in $\Phi$. 
Let us take $m=1$.  
Recall \eqref{eq:Haar-GLk}. 
For an arbitrary prior $\Pi_H$ on $H={\rm Mat}_{n \times k}$, 
we have from Theorem \ref{th:p(x,h)} that 
for the Bayesian model 
\[ 
\left( p(Y | M, \Phi^{1/2}{\rm O}_k; f), \ 
\Pi_{{\rm Mat}_{n\times k}} \otimes \mu_{{\rm GL}_k/{\rm O}_k}\right) 
\]  
with the invariant measure 
$\mu_{{\rm GL}_k/{\rm O}_k}=\pi(\mu_{{\rm GL}_k})$ on 
${\rm GL}_k/{\rm O}_k$, 
the density kernel of 
$(Y, M)$ with respect to 
$\lambda \otimes  
\Pi_{{\rm Mat}_{n\times k}}$ 
($\lambda$: Lebesgue measure) 
is proportional to 
\begin{eqnarray*} 
p(Y, M)
&=& \left| \det (Y_1-M_1) \right|^{-n} \\ 
&& \times 
\left[ \det \left\{ 
\left( 
I_k, (Y_1-M_1)^{-T}(Y_2-M_2)^T
\right) 
\Sigma_0^{-1}
\begin{pmatrix} 
I_k \\ 
(Y_2-M_2)(Y_1-M_1)^{-1}
\end{pmatrix} 
\right\} \right]^{-n/2}
\end{eqnarray*} 
for $Y=(Y_1^T, Y_2^T)^T$ and $M=(M_1^T, M_2^T)^T$, 
where $(Y_1-M_1)^{-T}:=((Y_1-M_1)^{-1})^T$. 

\subsection{Marginal equivalence in PCA} 
\label{sec:PCA} 

We will consider marginal equivalence in PCA 
when the eigenvalues are known up to 
a positive scalar multiplication. 

Suppose a random matrix 
$X \in {\rm Mat}_{n \times k}$ is distributed as 
\begin{equation} 
\label{eq:dist-of-X-in-PCA}  
X \sim \frac{1}{(\det \Sigma)^{n/2}} 
\tilde{f}(\tr (X\Sigma^{-1}X^T)) 
(dX), 
\end{equation} 
where 
\[ 
\Sigma = g^2 P\Lambda_0 P^T 
\] 
with 
$g>0, \ P \in {\rm O}_k, \ \Lambda_0 :=\diag (\ell_{1, 0},\ldots,\ell_{k, 0})$ 
($\ell_{1, 0}>\cdots >\ell_{k, 0}>0$). 
We assume $\Lambda_0$ is known. 
The parameter of interest is $P$ 
(ignoring the sign of each column), 
and $g$ is a nuisance parameter. 

Let 
\[ 
\calX = {\rm Mat}_{n\times k}, 
\quad 
\calX_* = \{ X=(x_1,\ldots,x_k) \in {\rm Mat}_{n\times k}: 
x_i \neq 0 \in \bbR^n \text{ for some } i=1,\ldots,k \}
\] 
and 
\[ 
H={\rm O}_k, \quad G=\bbR_{>0}
\]  
so that $\Delta_G=1$. 
Consider the actions 
\begin{eqnarray*} 
&& H \times \calX \ni 
(P, X) \mapsto XP^T \in \calX, \\ 
&& G \times \calX_* \ni 
(g, X) \mapsto gX \in \calX_* \text{ (scalar multiplication)}. 
\end{eqnarray*} 

We define 
\[
r(X):=\left\{ \tr ( X\Lambda_0^{-1}X^T)\right\}^{1/2}>0, \quad X \in \calX_*. 
\] 
Moreover, we take $\lambda(dX)=(dX)$ (Lebesgue measure), so that 
$\chi_H(P)=1, \ \chi_G(g)=g^{kn}$.  
Then the distribution of $X$ in \eqref{eq:dist-of-X-in-PCA} 
can be written as 
\begin{eqnarray*} 
&& \frac{1}{\chi_G(g) (\det \Lambda_0)^{n/2} } 
\tilde{f}(\{ r(g^{-1}X P^{-T})\}^2)
\lambda(dX) \\ 
&& \quad = \frac{1}{\chi_G(g)}f(r(g^{-1}X P^{-T})) \lambda(dX)  
= p(X | P, g; f) \lambda(dX) 
\end{eqnarray*} 
with  
$f( r ):=(\det \Lambda_0)^{-n/2}\tilde{f}( r^2 )$ 
and $s( \, \cdot \, )=1$. 

For any 
$P_0=\diag (\epsilon_1, \ldots, \epsilon_k) 
\ (\epsilon_i=\pm 1, \ i=1,\ldots,k)$, 
we have $r(XP_0)=r(X)$. 
Thus we can take 
\[
H_0=\{ P_0=\diag (\epsilon_1, \ldots, \epsilon_k):  
\epsilon_i=\pm 1, \ i=1,\ldots,k \}, 
\quad 
G_0=\{ 1 \}. 
\]
For brevity's sake, we will write the $H_0$ above as $\{ P_0 \}$. 

Now, concerning marginal equivalence, we obtain the following 
result. 
Let us take $m=1$. 
For an arbitrary prior $\Pi_{H/H_0}$ on $H/H_0={\rm O}_k/\{ P_0 \}$, 
we have from Theorem \ref{th:p(x,h)} that 
for the Bayesian model 
\[ 
\left( p(X | P\{ P_0 \}, g; f), \ \Pi_{{\rm O}_k/\{ P_0 \}}\otimes \mu_{\bbR_{>0}}\right) 
\]  
with noninformative $\mu_{\bbR_{>0}}(dg)=g^{-1}dg$, 
the density kernel of 
$(X, [P])=(X, P\{ P_0 \})$ 
with respect to $\lambda \otimes 
\Pi_{{\rm O}_k/\{ P_0 \}}$ 
($\lambda$: Lebesgue measure) 
is proportional to 
\[
p(X, [P])=\left\{ \tr (XP\Lambda_0^{-1}P^TX^T) \right\}^{-kn/2}. 
\] 


\section{Concluding remarks} 
\label{sec:concluding-remarks} 

In the framework of group invariance, 
we introduced a class of sampling models 
which can be regarded as a generalization of the $D$-class 
of distributions of \cite{FS05JRSS(B)}. 
Under these sampling models, we obtained 
some robustness properties of invariant statistics. 
When the prior for the nuisance parameter is 
relatively invariant, we also derived marginal equivalence. 
However, when the prior for the nuisance parameter is not 
relatively invariant, results about marginal equivalence 
as general as those in this paper do not seem to hold 
and only more restricted results can be expected 
(see Subsection 5.2 of \cite{FOS95JASA} for the case of $v$-spherical distributions). 

%
%

\end{document}